\newtheorem{theorem}{Theorem}
\newtheorem{lemma}{Lemma}[section]
\newtheorem{definition}{Definition}[section]
\begin{document}

\begin{center}
\Large{Error Estimates for Approximating Best Proximity Points for Cyclic Contractive Maps}
\end{center}

\begin{center}
Boyan Zlatanov
\end{center}

\textbf{Abstract:}
We find a priori and a posteriori error estimates of the best proximity point for the Picard iteration
associated to a cyclic contraction map, which is defined on a uniformly convex Banach space with modulus of convexity of power type.

{\bf Keywords:} best proximity points, uniformly convex Banach space, modulus of convexity, a priori error
estimate, a posteriori error estimate

{\bf AMS Subject Classification:} 41A25, 47H10, 54H25, 46B20

\section{Introduction}

A fundamental result in fixed point theory is the Banach Contraction Principle.
Fixed point theory is an important tool for solving equations $Tx = x$ for mappings $T$
defined on subsets of metric spaces or normed spaces. One of the advantage of Banach fixed point Theorem is the error estimates of the successive iterations and the rate of convergence. There are equations $Tx = x$ for which the exact solution is not easy to find or even
is not possible to find. The error estimate is very useful in these cases. An extensive study about approximations of fixed points can be found in \cite{Ber}.
One kind of a generalization of the Banach Contraction Principle is the notation of cyclical maps \cite{KSV}, i.e. $T(A)\subseteq B$ and $T(B)\subseteq A$.
Because a non-self mapping $T:A\to B$ does not necessarily have a fixed point, one often attempts to find an element
$x$ which is in some sense closest to $Tx$. Best proximity point theorems are relevant in this perspective.
The notation of best proximity point is introduced in \cite{EV}.
This definition is more general than the notation of cyclical maps, in sense that if the sets intersect,
then every best proximity point is a fixed point. A sufficient condition for existence and the uniqueness of best proximity points in uniformly convex Banach spaces is given in \cite{EV}. Since the publication \cite{EV} the problem for existence and uniqueness of best proximity point was widely investigated
see for example \cite{MR,P} and the research on this problem continues.

In contrast with all the results about fixed points for self maps and cyclic maps, where
''a priori error estimates`` and ''a posteriori error estimates`` are obtained
there are no such results about best proximity points.

We have obtained ''a priori error estimates`` and ''a posteriori error estimates`` for the cyclic contractions from \cite{EV}.

\section{Preliminaries}

In this section we give some basic definitions and concepts which are useful and related to the best proximity points.
Let $(X,\rho )$ be a metric space.
Define a distance between two subset $A,B\subset X$ by ${\rm dist}(A,B)=\inf\{\rho (x,y):x\in A, y\in B\}$.
For simplicity of the notations we will denote ${\rm dist}(A,B)$ with $d$.

Let $A$ and $B$ be nonempty subsets of a metric space $(X,\rho )$.
The map $T:A\bigcup B\to A\bigcup B$ is called a cyclic map if $T(A)\subseteq B$ and $T(B)\subseteq A$.
A point $\xi\in A$ is called a best proximity point of the cyclic map $T$ in $A$ if $\rho (\xi,T\xi )={\rm dist}(A,B)$.

Let $A$ and $B$ be nonempty subsets of a metric space $(X,\rho )$.
The map $T:A\bigcup B\to A\bigcup B$ is called a cyclic contraction map if $T$ is a cyclic map and
for some $k\in (0,1)$ there holds the inequality $\rho (Tx,Ty)\leq k\rho (x,y)+(1-k)d$
for any $x\in A$, $y\in B$. The definition for cyclic contraction is introduced in \cite{EV}.

The best proximity results need norm-structure of the space $X$.
When we investigate a Banach space $(X,\|\cdot\| )$ we will always consider the
distance between the elements to be generated by the norm $\|\cdot\|$ i.e. $\rho (x,y)=\|x-y\|$. We will denote the unit sphere
and the unit ball of a Banach space $(X,\|\cdot\| )$ by $S_X$ and $B_X$ respectively.

The assumption that the Banach space $(X,\|\cdot\|)$ is uniformly convex
plays a crucial role in the investigation of best proximity points.

\begin{definition}
Let $(X,\|\cdot\|)$ be a Banach space. For every $\varepsilon\in (0,2]$ we define the modulus of convexity of $\|\cdot\|$ by
$$
\delta_{\|\cdot\|}(\varepsilon)=\inf\left\{1-\left\|\frac{x+y}{2}\right\|:x,y\in B_X, \|x-y\|\geq\varepsilon\right\}.
$$
The norm is called uniformly convex if $\delta_X(\varepsilon)>0$ for all $\varepsilon\in (0,2]$. The space $(X,\|\cdot\|)$ is then called uniformly convex space.
\end{definition}

The results from \cite{EV} and \cite{KA} are summarized in the next theorem.
\begin{theorem}\label{EV}(\cite{EV,KA})
Let $A$ and $B$ be nonempty closed and convex subsets of a uniformly convex Banach space. Let $T:A\cup B\to A\cup B$ be a cyclic contraction map. Then there is a unique best proximity point $\xi$ of $T$ in $A$, $T\xi$ is a unique best proximity point of $T$ in $B$ and $\xi =T^2\xi =T^{2n}\xi$. Further if $x_0\in A$ and $x_{n+1}=Tx_n$, then $\{x_{2n}\}_{n=1}^\infty$ converges to $\xi$ and $x_{2n+1}$ converges to $T\xi$.
\end{theorem}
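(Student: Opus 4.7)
The plan is to analyze the Picard iteration $x_{n+1}=Tx_n$ with $x_0\in A$, so that even-indexed terms lie in $A$ and odd-indexed terms lie in $B$. The first step is to control consecutive distances. Applying the cyclic contraction inequality to the alternating pair $(x_{n-1},x_n)$ gives
\begin{equation*}
\|x_{n+1}-x_n\|=\|Tx_n-Tx_{n-1}\|\leq k\|x_n-x_{n-1}\|+(1-k)d,
\end{equation*}
and a routine induction on the shifted quantity $\|x_{n+1}-x_n\|-d$ yields $\|x_{n+1}-x_n\|\leq k^n(\|x_1-x_0\|-d)+d$, hence $\|x_{n+1}-x_n\|\to d$. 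In particular, one also always has $\|x_{n+1}-x_n\|\geq d$ since the points lie in different sets.

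Next I would upgrade this to $\|x_{n+2}-x_n\|\to 0$ using uniform convexity. The point is that both $\|x_{2n}-x_{2n+1}\|$ and $\|x_{2n+2}-x_{2n+1}\|$ tend to $d$, while the midpoint $\tfrac{1}{2}(x_{2n}+x_{2n+2})$ lies in the convex set $A$, so its distance from $x_{2n+1}\in B$ is at least $d$. Scaling the vectors by $d$ and applying the definition of $\delta_{\|\cdot\|}$, any cluster value $\|x_{2n+2}-x_{2n}\|\geq\varepsilon>0$ would force $\|\tfrac{1}{2}(x_{2n}+x_{2n+2})-x_{2n+1}\|\leq d(1-\delta_{\|\cdot\|}(\varepsilon/d))+o(1)$, contradicting $\geq d$. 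An analogous argument handles the odd subsequence.

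The main obstacle is then promoting $\|x_{n+2}-x_n\|\to 0$ to the Cauchy property of $\{x_{2n}\}$. This is the standard hard step in Eldred--Veeramani; I would argue by contradiction, assuming there are $\varepsilon>0$ and indices $2m_j>2n_j\to\infty$ with $\|x_{2m_j}-x_{2n_j}\|\geq\varepsilon$ and $\|x_{2m_j-2}-x_{2n_j}\|<\varepsilon$. Using the contraction inequality twice (applied to $T^2$ on the pair $(x_{2m_j-1},x_{2n_j-1})$ compared with $(x_{2m_j-2},x_{2n_j-2})$), combined with the previously established $\|x_{n+1}-x_n\|\to d$ and $\|x_{n+2}-x_n\|\to 0$, I would obtain a uniform-convexity squeeze on both $\|x_{2m_j-1}-x_{2n_j}\|$ and $\|x_{2m_j-1}-x_{2n_j-2}\|$ forcing both to tend to $d$; applying the lemma in the preceding paragraph to $x_{2n_j}, x_{2n_j-2}\in A$ and the common neighbor $x_{2m_j-1}\in B$ then gives $\|x_{2n_j}-x_{2n_j-2}\|\to 0$, whence by the triangle inequality and the contraction $\|x_{2m_j}-x_{2n_j}\|\to d$, contradicting $\geq\varepsilon>d$ only after reducing to the case $\varepsilon>d$ (which is harmless since $\|x_{2m_j}-x_{2n_j}\|\geq d$ and we may replace $\varepsilon$ by any larger value). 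By completeness of $X$ and closedness of $A$, $x_{2n}\to\xi\in A$.

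Finally, continuity of $T$ (which is a contraction-type map, hence Lipschitz) gives $x_{2n+1}=Tx_{2n}\to T\xi$ and $x_{2n+2}=T^2x_{2n}\to T^2\xi$, so $T^2\xi=\xi$, and $\|\xi-T\xi\|=\lim\|x_{2n}-x_{2n+1}\|=d$ shows $\xi$ is a best proximity point in $A$, with $T\xi$ the corresponding one in $B$. For uniqueness, given another best proximity point $\eta\in A$, the identities $\xi=T^2\xi$, $\eta=T^2\eta$ combined with two applications of the contraction inequality yield $\|\eta-T\xi\|\leq k^2\|\eta-T\xi\|+(1-k^2)d$, so $\|\eta-T\xi\|=d$. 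Then $\xi,\eta\in A$, $T\xi\in B$, both at distance exactly $d$ from $T\xi$, and $\tfrac{1}{2}(\xi+\eta)\in A$ also at distance $\geq d$ from $T\xi$; uniform convexity (in its strict-convexity form) forces $\xi=\eta$, completing the proof.
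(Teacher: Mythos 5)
The paper itself gives no proof of this theorem --- it is quoted from \cite{EV} and \cite{KA} --- so your reconstruction has to be measured against the original argument of Eldred and Veeramani. Your first two steps (the estimate $\|x_{n+1}-x_n\|-d\leq k^n(\|x_1-x_0\|-d)$, which is Lemma \ref{lem:1} of the paper, and the deduction $\|x_{n+2}-x_n\|\to 0$ from uniform convexity) are correct and coincide with theirs. The uniqueness argument at the end is essentially right, except that you invoke $T^2\eta=\eta$ for the second best proximity point $\eta$ without proving it; this must be established first (from $\|\eta-T\eta\|=d$ one gets $\|T^2\eta-T\eta\|\leq k d+(1-k)d=d$, hence $=d$, and strict convexity applied to $\eta,T^2\eta\in A$ and $T\eta\in B$ forces $T^2\eta=\eta$). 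Note also that every appeal to $\delta_{\|\cdot\|}(\cdot/d)$ tacitly assumes $d>0$; the case $d=0$ has to be split off (there $T$ is a cyclic contraction in the sense of \cite{KSV} and the statement reduces to a fixed point theorem).

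The genuine gap is in the Cauchy step, which is precisely the hard step of \cite{EV}. Two things go wrong. First, the cyclic contraction inequality $\|Tx-Ty\|\leq k\|x-y\|+(1-k)d$ is available only for $x\in A$, $y\in B$; you propose to apply it (via $T^2$) to the pair $(x_{2m_j-1},x_{2n_j-1})$, both of which lie in $B$, and implicitly to $(x_{2m_j},x_{2n_j})$, both of which lie in $A$. No contractive control of $\|x_{2m}-x_{2n}\|$ is available for two points of the same set. Second, your final contradiction requires $\varepsilon>d$, and the justification offered --- that $\|x_{2m_j}-x_{2n_j}\|\geq d$ and that ``we may replace $\varepsilon$ by any larger value'' --- fails on both counts: $x_{2m_j}$ and $x_{2n_j}$ both lie in $A$, so their mutual distance has no lower bound related to $d$, and the $\varepsilon$ witnessing the failure of the Cauchy property cannot be enlarged. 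The intermediate assertion that a ``uniform-convexity squeeze'' forces $\|x_{2m_j-1}-x_{2n_j}\|\to d$ is exactly the content that needs proof and is nowhere argued; if it were available, the lemma of your second step would finish the proof in one line. The correct route is the one in \cite{EV}: work with the cross distances $\|x_{2m}-x_{2n+1}\|$, which involve one point of each set, so they are automatically $\geq d$ and the contraction inequality applies to them; show by a minimal-counterexample argument that for every $\varepsilon>0$ there is $N$ with $\|x_{2m}-x_{2n+1}\|<d+\varepsilon$ for all $m>n\geq N$ (the contradiction there reads $d+\varepsilon\leq k^{2}(d+\varepsilon)+(1-k^{2})d$, i.e.\ $\varepsilon\leq k^{2}\varepsilon$, and needs no restriction on $\varepsilon$); then apply the uniform-convexity lemma of your second step to $x_{2m},x_{2n+2}\in A$ and the common neighbour $x_{2n+1}\in B$ to conclude that $\{x_{2n}\}$ is Cauchy.
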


For any uniformly convex Banach space $X$ there holds the inequality
\begin{equation}\label{eq:1}
\left\|\frac{x+y}{2}-z\right\|\leq \left(1-\delta_X\left(\frac{r}{R}\right)\right)R
\end{equation}
for any $x,y,z\in X$, $R>0$, $r\in [0,2R]$, $\|x-z\|\leq R$, $\|y-z\|\leq R$ and $\|x-y\|\geq r$.

If $(X,\|\cdot\|)$ is a uniformly convex Banach space, then $\delta_X(\varepsilon)$ is strictly increasing function.
Therefore if $(X,\|\cdot\|)$ is a uniformly convex Banach space then there exists the inverse function $\delta^{-1}$ of the modulus of convexity.
If there exist constants $C>0$ and $q>0$, such that the inequality $\delta_{\|\cdot\|}(\varepsilon)\geq C\varepsilon^q$ holds for every $\varepsilon\in (0,2]$ we say that the modulus of convexity is of power type $q$. It is well known that
 for any Banach space and for any norm there holds the inequality
$\delta (\varepsilon)\leq K\varepsilon^2$. The modulus of convexity with respect to the canonical norm $\|\cdot\|_p$ in $\ell_p$ or $L_p$ is
$\delta_{\|\cdot\|_p}(\varepsilon)=1-\root p \of {1-\left(\frac{\varepsilon}{2}\right)^p}$ for $p\geq 2$
and for $1<p<2$ the modulus of convexity $\delta_{\|\cdot\|_p} (\varepsilon)$ is the solution of the equation
$\left(1-\delta+\frac{\varepsilon}{2}\right)^p+\left|1-\delta -\frac{\varepsilon}{2}\right|^p=2$.
It is well known that the modulus of convexity with respect to the canonical norm in $\ell_p$ or $L_p$ is of power type and there
 holds the inequalities $\delta_{\|\cdot\|_p}(\varepsilon)\geq \frac{\varepsilon^p}{p2^p}$ for $p\geq 2$ and
$\delta_{\|\cdot\|_p}(\varepsilon)\geq \frac{(p-1)\varepsilon^2}{8}$ for $p\in (1,2)$ \cite{Meir}.

An extensive study of the Geometry of Banach spaces can be found in \cite{BB, DGZ, FHHMPZ}.
The next lemma is easy to get and it is used without stating it in most of the articles about best proximity points.
\begin{lemma}\label{lem:1}
Let $A$ and $B$ be nonempty subsets of a metric space $(X,\rho )$ and let $T:A\cup B\to A\cup B$ be a cyclic contraction map.
Then for every $x\in A\cup B$ there holds the inequality
$\rho (T^nx,T^{n+1}x)-d\leq k^n\left(\rho (x,Tx)-d\right)$.
\end{lemma}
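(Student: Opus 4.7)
The plan is to derive a one-step recursive inequality from the definition of a cyclic contraction and then iterate it. Fix $x\in A\cup B$; without loss of generality assume $x\in A$, so that $Tx\in B$. The definition of a cyclic contraction applies to any pair $(u,v)$ with $u\in A$, $v\in B$, and gives $\rho(Tu,Tv)\le k\rho(u,v)+(1-k)d$. Specializing this to $u=x$, $v=Tx$ yields
\[
\rho(Tx,T^{2}x)\le k\rho(x,Tx)+(1-k)d,
\]
which, after subtracting $d$ from both sides, rearranges to $\rho(Tx,T^{2}x)-d\le k\bigl(\rho(x,Tx)-d\bigr)$.

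Next I would argue by induction on $n$ that $\rho(T^{n}x,T^{n+1}x)-d\le k^{n}\bigl(\rho(x,Tx)-d\bigr)$. The base case $n=0$ is trivial and the case $n=1$ is exactly what was just shown. For the inductive step, one notes that $T^{n}x$ and $T^{n+1}x$ lie in different sets (one in $A$, the other in $B$) by the cyclic property of $T$, so the contraction inequality may be applied to the pair $(T^{n}x,T^{n+1}x)$ (or equivalently to $(T^{n-1}x, T^{n}x)$ at the previous level), giving
\[
\rho(T^{n+1}x,T^{n+2}x)-d\le k\bigl(\rho(T^{n}x,T^{n+1}x)-d\bigr)\le k\cdot k^{n}\bigl(\rho(x,Tx)-d\bigr).
\]

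There is essentially no obstacle here: the only point requiring a moment's care is checking that, at each iteration, the two arguments of $\rho$ indeed belong to opposite sets so that the cyclic contraction inequality is applicable. This is guaranteed by $T(A)\subseteq B$ and $T(B)\subseteq A$. The case $x\in B$ is treated symmetrically (starting the induction from the pair $(Tx,x)$ with $Tx\in A$), so the claim holds uniformly for $x\in A\cup B$.
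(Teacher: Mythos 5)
Your proof is correct: the induction on $n$, applying the cyclic contraction inequality to the consecutive iterates $T^{n}x$ and $T^{n+1}x$ (which lie in opposite sets by cyclicity), is exactly the standard argument. The paper itself omits any proof of this lemma, describing it only as ``easy to get,'' so your write-up simply supplies the intended routine verification.
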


\section{Error estimates for best proximity points}

\begin{theorem}\label{thm:main}
Let $A$ and $B$ be nonempty, closed and convex subsets of a uniformly convex Banach $(X,\|\cdot\|)$ space, such that $d={\rm dist}(A,B)>0$, and let there exist $C>0$ and $q\geq 2$, such that
$\delta_{\|\cdot\|}(\varepsilon)\geq C\varepsilon^q$. Let $T:A\cup B\to A\cup B$ be a cyclic contraction map. Then
\begin{enumerate}[\normalfont (i)]
\item there exists a unique best proximity point $\xi$ of $T$ in $A$,
$T\xi$ is a unique best proximity point of $T$ in $B$ and $\xi =T^2\xi =T^{2n}\xi$;
\item for any $x_0\in A$ the sequence $\{x_{2n}\}_{n=1}^\infty$ converges to $\xi$ and $\{x_{2n+1}\}_{n=1}^\infty$ converges to $T\xi$,
where $x_{n+1}=Tx_n$, $n=0,1,2,\dots $;
\item a priori error estimate holds
\begin{equation}\label{eq:2}
\left\|\xi-T^{2n}x\right\|
\leq\frac{\|x-Tx\|}{1-\root {q} \of {k^2}}\root{q}\of{\frac{\|x-Tx\|-d}{Cd}}
\left(\root {q} \of k\right)^{2n};
\end{equation}
\item a posteriori error estimate holds
\begin{equation}\label{eq:3}
\left\|T^{2n}x-\xi\right\|\leq
\frac{\|T^{2n-1}x-T^{2n}x\|}{1-\root {q} \of {k^2}}\root{q}\of{\frac{\|T^{2n-1}x-T^{2n}x\|-d}{Cd}}
\root {q} \of k.
\end{equation}
\end{enumerate}
\end{theorem}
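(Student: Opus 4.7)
Items (i) and (ii) are immediate from Theorem~\ref{EV}: a uniformly convex Banach space with a power-type modulus is in particular uniformly convex, so the hypotheses are met. The real work is in (iii) and (iv), both of which I would derive from a single step-two bound along the orbit, obtained by combining the uniform-convexity inequality~(\ref{eq:1}) with Lemma~\ref{lem:1}.

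The key auxiliary claim is: for every $z\in A$ and every $m\geq 0$,
\[
\|T^{2m}z-T^{2m+2}z\|\,\leq\,\|z-Tz\|\left(\frac{\|z-Tz\|-d}{Cd}\right)^{1/q}k^{2m/q}.
\]
To prove it, put $u=T^{2m}z$, $v=T^{2m+2}z\in A$, $w=T^{2m+1}z\in B$. Convexity of $A$ gives $(u+v)/2\in A$, hence $\|(u+v)/2-w\|\geq d$. By Lemma~\ref{lem:1}, the quantity $R:=d+k^{2m}(\|z-Tz\|-d)$ dominates both $\|u-w\|$ and $\|v-w\|$ and satisfies $R\leq\|z-Tz\|$. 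Applying~(\ref{eq:1}) with $r:=\|u-v\|$ and the hypothesis $\delta_{\|\cdot\|}(\varepsilon)\geq C\varepsilon^{q}$ gives $d\leq(1-C(r/R)^{q})R$, which rearranges to $r^{q}\leq R^{q}(R-d)/(CR)$. Bounding $R^{q}\leq\|z-Tz\|^{q}$ in the numerator and $CR\geq Cd$ in the denominator, and substituting $R-d=k^{2m}(\|z-Tz\|-d)$, yields the claim.

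Part (iii) follows by telescoping: $\|T^{2n}x-\xi\|\leq\sum_{m=n}^{\infty}\|T^{2m}x-T^{2m+2}x\|$ (since $\xi=T^{2m}\xi$ by~(i)), and summing the geometric series with ratio $k^{2/q}\in(0,1)$ reproduces~(\ref{eq:2}). For (iv), the same uniform-convexity argument applied to the $A$-valued odd-indexed iterates of an element $z\in B$ — with $u=T^{2m+1}z$, $v=T^{2m+3}z\in A$ and $w=T^{2m+2}z\in B$ — yields
\[
\|T^{2m+1}z-T^{2m+3}z\|\,\leq\,\|z-Tz\|\left(\frac{\|z-Tz\|-d}{Cd}\right)^{1/q}k^{(2m+1)/q},
\]
because the corresponding $R$ now satisfies $R-d\leq k^{2m+1}(\|z-Tz\|-d)$. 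Setting $z=T^{2n-1}x$ — so that $Tz=T^{2n}x$, the odd iterates $T^{2m+1}z$ converge to $\xi$, and $\|z-Tz\|=\|T^{2n-1}x-T^{2n}x\|$ — and telescoping produces~(\ref{eq:3}), the extra factor $k^{1/q}$ coming from the shifted exponent.

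The delicate point is the manipulation $r^{q}\leq R^{q-1}(R-d)/C=R^{q}(R-d)/(CR)$: one must bound the $R^{q}$ in the numerator above by $\|z-Tz\|^{q}$ while simultaneously bounding the $R$ in the denominator below by $d$. Using the cruder estimate $R\leq\|z-Tz\|$ uniformly would produce $\|z-Tz\|^{(q-1)/q}$ in place of the stated prefactor $\|z-Tz\|$, and would miss the precise form of~(\ref{eq:2}) and~(\ref{eq:3}). Everything else is routine geometric-series arithmetic and the convexity of $A$ and $B$.
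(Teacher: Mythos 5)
Your proposal is correct and follows essentially the same route as the paper: Lemma~\ref{lem:1} supplies the radius $R=d+k^{l}(\|z-Tz\|-d)$ that is fed into inequality~(\ref{eq:1}), the power-type lower bound on $\delta_{\|\cdot\|}$ is inverted to give the two-step estimate $\|T^{2m}z-T^{2m+2}z\|\leq\|z-Tz\|\left(\frac{\|z-Tz\|-d}{Cd}\right)^{1/q}k^{l/q}$, and both error bounds follow by telescoping a geometric series. The only cosmetic difference is that the paper proves a single inequality parametrized by the lag $l$ and then specializes to $l=2n$ for the a priori bound and $l=1+2i$ for the a posteriori bound, whereas you treat the even-lag and odd-lag cases as two symmetric instances of the same argument.
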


\begin{proof}
The proof of (i) and (ii) follows from Theorem \ref{EV}.

We will use the notation $S_{n,m}(x)=\|T^{n}x-T^{m}x\|-d$, just to be able to fit some of the formulas in the text field.

(iii) For any $x\in A$, $n\in\mathbb{N}$ and $l\leq 2n$ there holds the inequality
$$
\delta_{\|\cdot\|}\left(\frac{\|T^{2n}x-T^{2n+2}x\|}{d+k^{l}S_{2n-l,2n+1-l}(x)}\right)
\leq\frac{k^{l}S_{2n-l,2n+1-l}(x)}{d+k^{l}S_{2n-l,2n+1-l}(x)}.
$$

Indeed let $x\in A$ be arbitrary chosen.
From Lemma \ref{lem:1} we have the inequalities
$$
\|T^{2n}x-T^{2n+1}x\|\leq d+k^{l}S_{2n-l,2n+1-l}(x),
$$
$$
\|T^{2n+2}x-T^{2n+1}x\|\leq d+k^{l+1}S_{2n-l,2n+1-l}(x)<d+k^{l}S_{2n-l,2n+1-l}(x)
$$
and
$$
\begin{array}{lll}
\|T^{2n+2}x-T^{2n}x\|&\leq&
 \|T^{2n+2}x-T^{2n+1}x\|+\|T^{2n+1}x-T^{2n}x\|\\
 &\leq& 2\left(d+k^{l}S_{2n-l,2n+1-l}(x)\right).
\end{array}
$$
After a substitution in (\ref{eq:1}) with $x=T^{2n}x$, $y=T^{2n+2}x$, $z=T^{2n+1}x$,
$r=\|T^{2n+2}x-T^{2n}x\|$ and $R=d+k^{l}\left(\|T^{2n-l}x-T^{2n+1-l}x\|-d\right)=d+k^{l}S_{2n-l,2n+1-l}(x)$
and using the convexity of the set $A$ we get the chain of inequalities
\begin{equation}\label{eq:5}
\begin{array}{lll}
d&\leq&\left\|\frac{T^{2n}x+T^{2n+2}x}{2}-T^{2n+1}x\right\|\\[12pt]
&\leq&\left(1-\delta_{\|\cdot\|}\left(\frac{\|T^{2n}x-T^{2n+2}x\|}{d+k^{l}S_{2n-l,2n+1-l}(x)}\right)\right)\left(d+k^{l}S_{2n-l,2n+1-l}(x)\right).
\end{array}
\end{equation}
From (\ref{eq:5}) we obtain the inequality
\begin{equation}\label{eq:61}
\delta_{\|\cdot\|}\left(\frac{\|T^{2n}x-T^{2n+2}x\|}{d+k^{l}S_{2n-l,2n+1-l}(x)}\right)
\leq\frac{k^{l}S_{2n-l,2n+1-l}(x)}{d+k^{l}S_{2n-l,2n+1-l}(x)}.
\end{equation}
From the uniform convexity of $X$ is follows that $\delta_{\|\cdot\|}$ is strictly increasing and therefore there exists its inverse function
$\delta^{-1}_{\|\cdot\|}$, which is strictly increasing too. From (\ref{eq:61}) we get
\begin{equation}\label{eq:6}
\|T^{2n}x-T^{2n+2}x\|
\leq \left(d+k^{l}S_{2n-l,2n+1-l}(x)\right)\delta^{-1}_{\|\cdot\|}\left(\frac{k^{l}S_{2n-l,2n+1-l}(x)}{d+k^{l}S_{2n-l,2n+1-l}(x)}\right).
\end{equation}
By the inequality $\delta_{\|\cdot\|}(t)\geq Ct^q$ it follows that $\delta^{-1}_{\|\cdot\|}(t)\leq \left(\frac{t}{C}\right)^{1/q}$. From
(\ref{eq:6}) and the inequalities $d\leq d+k^{l}S_{2n-l,2n+1-l}(x)\leq \|T^{2n-l}x-T^{2n+1-l}x\|$ we obtain
\begin{equation}\label{eq:7}
\begin{array}{lll}
\|T^{2n}x-T^{2n+2}x\|&\leq&
\left(d+k^{l}S_{2n-l,2n+1-l}(x)\right)\root{q}\of{\frac{k^{l}S_{2n-l,2n+1-l}(x)}
{C.\left(d+k^{l}S_{2n-l,2n+1-l}(x)\right)}}\\
&\leq&
\|T^{2n-l}x-T^{2n+1-l}x\|\root{q}\of{\frac{S_{2n-l,2n+1-l}(x)}{Cd}}\left(\root{q}\of{k}\right)^l.
\end{array}
\end{equation}

From (i) and (ii) there exists a unique $\xi$, such that $\|\xi-T\xi\|=d$, $T^2\xi=\xi$
and $\xi$ is a limit of the sequence $\{T^{2n}x\}_{n=1}^\infty$ for any $x\in A$.

After a substitution with $l=2n$ in (\ref{eq:7}) we get the inequality
$$
\begin{array}{lll}
\sum_{n=1}^\infty\left\|T^{2n}x-T^{2n+2}x\right\|&\leq&
\|x-Tx\|\root{q}\of{\frac{\|x-Tx\|-d}{Cd}}\sum_{n=1}^\infty \left(\root {q} \of k\right)^{2n}\\
&=&\|x-Tx\|\root{q}\of{\frac{\|x-Tx\|-d}{Cd}}\cdot\frac{\root {q} \of {k^2}}{1-\root {q} \of {k^2}}
\end{array}
$$
and consequently the series $\sum_{n=1}^\infty (T^{2n}x-T^{2n+2}x)$ is absolutely convergent.
Thus for any $m\in\mathbb{N}$ there holds
$\xi=T^{2m}x-\sum_{n=m}^\infty \left(T^{2n}x-T^{2n+2}x\right)$
and therefore we get the inequality
$$
\left\|\xi-T^{2m}x\right\|\leq \sum_{n=m}^\infty \left\|T^{2n}x-T^{2n+2}x\right\|
\leq\|x-Tx\|\root{q}\of{\frac{\|x-Tx\|-d}{Cd}}\cdot\frac{\left(\root {q} \of k\right)^{2m}}{1-\root {q} \of {k^2}}.
$$

(iv) We will use the notation $P_{n,m}(x)=\|T^{n}x-T^{m}x\|$, just to be able to fit some of the formulas in the text field.
After a substitution with $l=1+2i$ in (\ref{eq:7}) we obtain
\begin{equation}\label{eq:9}
P_{2n+2i,2n+2(i+1)}(x)\leq
P_{2n-1,2n}(x)\root{q}\of{\frac{P_{2n-1,2n}(x)-d}{Cd}}\left(\root{q}\of{k}\right)^{1+2i}.
\end{equation}
From (\ref{eq:9}) we get that there holds the inequality
\begin{equation}\label{eq:10}
\begin{array}{lll}
P_{2n,2(n+m)}(x)&\leq&
\sum_{i=0}^{m-1}P_{2n+2i,2n+2(i+1)}(x)\\
&\leq&\sum_{i=0}^{m-1}P_{2n-1,2n}(x)\root{q}\of{\frac{P_{2n-1,2n}(x)-d}{Cd}}\left(\root{q}\of{k}\right)^{1+2i}\\
&=&P_{2n-1,2n}(x)\root{q}\of{\frac{P_{2n-1,2n}(x)-d}{Cd}}\sum_{i=0}^{m-1} \left(\root {q} \of k\right)^{1+2i}\\
&=&P_{2n-1,2n}(x)\root{q}\of{\frac{P_{2n-1,2n}(x)-d}{Cd}}\cdot
\frac{1-\left(\root {q} \of k\right)^{2m}}{1-\root {q} \of {k^2}}\root {q} \of k
\end{array}
\end{equation}
and after letting $m\to\infty$ in (\ref{eq:10}) we obtain the inequality
$$
\left\|T^{2n}x-\xi\right\|\leq
\|T^{2n-1}x-T^{2n}x\|\root{q}\of{\frac{\|T^{2n-1}x-T^{2n}x\|-d}{Cd}}
\frac{\root {q} \of k}{1-\root {q} \of {k^2}}.
$$
\end{proof}
\section{Remarks and an Example}

Following \cite{Ber} we would like to say a few words about the error estimates.

The a priori estimate (\ref{eq:2}) shows that, when starting from an initial guess
$x\in A$ the upper bound of approximation error for the $2n$ iterate is completely determined
by the cyclic contraction coefficient $k$ and the initial displacement $\|x-Tx\|$.

Similarly, the a posteriori estimate shows that, in order to obtain the
desired error approximation $\|T^{2n}-\xi\|<\varepsilon$ of the fixed point by means of Picard iteration
we need to stop the iterative process at the first
step $2n$ for which the displacement between two consecutive iterates satisfies the inequality
$\frac{\|T^{2n-1}x-T^{2n}x\|}{1-\root {q} \of {k^2}}\root{q}\of{\frac{\|T^{2n-1}x-T^{2n}x\|-d}{Cd}}
\root {q} \of k<\varepsilon$.
Thus the a posteriori estimation offers a direct stopping criterion for the
iterative approximation of fixed points by Picard iteration, while the a priori
estimation indirectly gives a stopping criterion.

We will illustrate Theorem \ref{thm:main} with the next example.

{\bf Example 1:} Let consider the space $\mathbb{R}^2=\{(x,y):x,y\in\mathbb{R}\}$ endowed with the norms
$\|x\|_p=\root{p}\of{|x|^p+|y|^p}$, for $p>1$. The space $(\mathbb{R},\|\cdot\|_p)$ is uniformly convex with modulus of convexity
of power type, provided that $p>1$.
Let us consider the sets
$A=\{(x,y)\in\mathbb{R}^2:y-x+1\leq 0, y+x-1\geq 0\}$
and
$B=\{(x,y)\in\mathbb{R}^2:y-x-1\geq 0, y+x+1\leq 0\}$.

It is easy to calculate ${\rm dist}(A,B)=2$.
Let $\lambda \in(0,1)$ .
Let us define a map $T:\mathbb{R}^2_p\to\mathbb{R}^2_p$ by
$T(x,y)=\left(-((1-\lambda ){\rm sign} (x)+\lambda x),-\lambda y\right)$,
where ${\rm sign} (x)=1$ if $x>0$, ${\rm sign} (x)=-1$ if $x<0$ and ${\rm sign} (x)=0$ if $x=0$.

We will show that the map $T:A\cup B\to A\cup B$ is a cyclic contraction with $k=\lambda$.

Let $z=(x,y)\in A$. From $x,y\geq 0$ we get
$-\lambda y-(1-\lambda +\lambda x)+1=-(\lambda y+\lambda x-\lambda )\leq 0$ and
$-\lambda y+(1-\lambda +\lambda x)-1=-(\lambda y-\lambda x+\lambda )\geq 0$.
Therefore $T(A)\subseteq B$. The inclusion $T(B)\subseteq A$ is proven in a similar fashion.

Let us put $u_1=(x_1,y_1)\in A$, $u_2=(x_2,y_2)\in B$ and $e_1=(1,0)\in A$. It is easy to observe that $e_1$ is a best proximity point of $T$ in
$A$, $T(e_1)=-e_1$ and $T^2(e_1)=T(-e_1)=e_1$.
We get the chain of inequalities
$$
\begin{array}{lll}
\|T(x_1,y_1)-T(x_2,y_2)\|_p&\hspace{-6pt}\leq&\hspace{-6pt}
\root{p}\of{\left|2(1-\lambda)+\lambda(x_1+|x_2|)\right|^p+\left|\lambda(y_1+|y_2|)\right|^p}\\
&\hspace{-6pt}\leq&\hspace{-6pt}\left\|2(1-\lambda)e_1+\lambda(u_1-u_2)\right\|_p\\
&\hspace{-6pt}\leq&\hspace{-6pt}\lambda\|u_1-u_2\|_p+2(1-\lambda)\|e_1\|_p\\
&\hspace{-6pt}\leq&\hspace{-6pt} \lambda\|u_1-u_2\|_p+(1-\lambda)d.
\end{array}
$$
Thus we can apply Theorem \ref{thm:main} to get error estimates of the successive iterations $\{x_{2n}\}_{n=1}^\infty$, where $x_{n+1}=Tx_n$.

We will consider a numeric example with $\lambda=2^{-1}$.
From \cite{Meir} we get $C=\displaystyle\frac{1}{p2^p}$, $q=p$ for $p\geq 2$ and $C=\displaystyle\frac{p-1}{8}$, $q=2$ for $p\in (1,2]$.

\begin{center}
\captionof{table}{Number $2n$ of iterations, needed by the a posteriori estimate for $\lambda=2^{-1}$
with an initial point $x_0 =(1000,8)$}
\label{table1}
\begin{tabular*}{0.9\textwidth}{@{\extracolsep{\fill} }lrrrrrr}
\hline
$\varepsilon\setminus p$&1.1&1.5&2&3&5&20\\
\hline
$10^{-2} $&34&32&30&42 &66 &266\\
$10^{-4} $&48&46&44&62 &100&398\\
$10^{-6} $&60&58&58&82 &132&532\\
$10^{-8} $&74&72&70&102&166&664\\
$10^{-10}$&88&84&84&122&200&798\\
\hline
\end{tabular*}
\end{center}

\begin{center}
\captionof{table}{Number $2n$ of iterations, needed by the a priori estimate for $\lambda=2^{-1}$
with an initial point $x_0 =(1000,8)$}\label{table2}
\begin{tabular*}{0.9\textwidth}{@{\extracolsep{\fill} }lrrrrrr}
\hline
$\varepsilon\setminus p$&1.1&1.5&2&3&5&20\\
\hline
$10^{-2} $&54&50&46&64 &104 &428\\
$10^{-4} $&66&64&58&84 &138&560\\
$10^{-6} $&80&78&72&104 &170&694\\
$10^{-8} $&94&90&86&124&204&826\\
$10^{-10}$&106&104&98&144&238&960\\
\hline
\end{tabular*}
\end{center}

\section{Conclusion and open questions}

We would like to mention that the error estimates give much larger number of the iterations that are needed.
It is due to the fact that we use the modulus of convexity, which is the infinum of $1-\left\|\frac{x+y}{2}\right\|$
among all $x,y\in S_x$, such that $\|x-y\|\geq\varepsilon$. It may happen that the modulus of convexity is greater in the direction of the best proximity point $\xi$ than in the other directions but for the estimation of the error we do not use it. We would like to pose the following question is it possible to get better estimates if we use the directional modulus of convexity $\delta_{\|\cdot\|}(x,\varepsilon)$?

For the estimations we use geometric progression and that is why we impose the condition for the modulus of convexity to be of power type. Is it possible to obtain error estimates if the modulus of convexity is not of power type?

Is it possible to obtain error estimates for the sequence of successive iterates for weak cyclic Kannan contractions \cite{P} and
for cyclic $\phi$--contractions \cite{MR}?

\bibliographystyle{amsplain}

\end{document}